\documentclass{amsart}

\usepackage{amsmath}
\usepackage{amssymb}
\usepackage{amsthm}
\usepackage{amscd}

\usepackage[latin1]{inputenc}
\usepackage{graphicx}

\renewcommand{\subjclassname}{AMS \textup{2010} Mathematics Subject
Classification\ }

\newtheorem{teor}{Theorem}
\newtheorem{cor}{Corollary}
\newtheorem{prop}{Proposition}
\newtheorem{lem}{Lemma}
\theoremstyle{definition}
\newtheorem*{exa}{Example}
\newtheorem*{rem}{Remark}

\author{Antonio M. Oller-Marc\'{e}n}
\title{On arithmetic numbers}
\address{Departamento de Matem\'{a}ticas, Universidad de Zaragoza\\
C/Pedro Cerbuna 12, 50009 Zaragoza (Espa\~{n}a)} \email{oller@unizar.es}

\begin{document}
\maketitle

\begin{abstract}
An integer $n$ is said to be \textit{arithmetic} if the arithmetic mean of its divisors is an integer. In this paper, using properties of the factorization of values of cyclotomic polynomials, we characterize arithmetic numbers. As an application, in Section 2, we give an interesting characterization of Mersenne numbers.
\end{abstract}
\subjclassname{11A25, 11B99}\\

Keywords: Arithmetic number, prime power, mean of divisors.

\section{Introduction}
For an integer $n$ we can define \cite{ORE} the arithmetic function $A(n)$ as the arithmetic mean of the divisors of $n$; i.e., $A(n)=\frac{\sigma(n)}{\tau(n)}$. An integer $n$ is then said to be \textit{arithmetic} \cite[B2]{GUY} if $A(n)$ is an integer (see sequence A003601 in OEIS).

Ore \cite{ORE} characterized square-free arithmetic numbers. The set of arithmetic numbers has density 1 \cite{POM} and Bateman et al. \cite{BEP} have studied the distribution of non-arithmetic numbers. Nevertheless, we have not been able to find in the literature a general solution to the problem of the characterization of arithmetic numbers. 

Since $A(n)$ is an arithmetic function, it is natural to study the case when $n=p^k$ is a prime power. In this case we can easily give an explicit expression for $A(n)$. Namely:
$$A(p^{k})=\frac{1}{k+1}\sum_{i=0}^{k} p^i=\frac{p^{k+1}-1}{(k+1)(p-1)}=\frac{1}{k+1}\prod_{1\neq d|k+1} \Phi_{d}(p),$$
where $\Phi_d$ denotes, as usual, the $d$-th cyclotomic polynomial.

From the above expression it is quite clear that the prime factorization of numbers of the form $\Phi_d(a)$ will play a key role. In particular, the following classical result \cite{ROI} will be useful.

\begin{teor}
Let $a,n\geq 2$ be integers and let $p$ be the largest prime factor of $n$. Put $n=p^km$, then:
\begin{itemize}
\item[i)] $p$ is a prime factor of $\Phi_n(a)$ if and only if $\textrm{ord}_p(a)=m$ (hence $m$ divides $p-1$). Moreover, in this case, $p^2$ does not divide $\Phi_n(a)$.
\item[ii)] If $q$ is another prime dividing $\Phi_n(a)$, then $\textrm{ord}_q(a)=n$. Moreover, in this case, $q$ does not divide $n$ if and only if $q\equiv 1$ (mod $n$).
\end{itemize}
\end{teor}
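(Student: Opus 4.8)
The plan is to reduce everything to a prime-by-prime analysis of the factorization
$$\Phi_n(a)=\prod_{e\mid n}(a^e-1)^{\mu(n/e)},$$
obtained from $a^n-1=\prod_{d\mid n}\Phi_d(a)$ by M\"obius inversion. Fix a prime $q\mid\Phi_n(a)$; since $\Phi_n(a)\mid a^n-1$ we have $q\nmid a$, so $d:=\mathrm{ord}_q(a)$ is defined and $d\mid n$. The technical heart of the argument is an exact computation of the $q$-adic valuation $v_q(\Phi_n(a))$, from which both parts fall out.

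First I would establish the valuation formula: writing $N=n/d$ when $d\mid n$,
$$v_q(\Phi_n(a))=\begin{cases}v_q(a^d-1), & N=1,\\ 1, & N=q^{j}\ (j\ge1),\\ 0, & \text{otherwise,}\end{cases}$$
together with $v_q(\Phi_n(a))=0$ when $d\nmid n$. The input is the Lifting-the-Exponent lemma, which for odd $q$ gives $v_q(a^e-1)=v_q(a^d-1)+v_q(e/d)$ whenever $d\mid e$, and $v_q(a^e-1)=0$ when $d\nmid e$. Substituting into $v_q(\Phi_n(a))=\sum_{e\mid n}\mu(n/e)\,v_q(a^e-1)$ and retaining only the terms with $d\mid e$ (writing $e=d\delta$, $\delta\mid N$), the sum splits as $v_q(a^d-1)\sum_{\delta\mid N}\mu(N/\delta)+\sum_{\delta\mid N}\mu(N/\delta)\,v_q(\delta)$. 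The first sum is $1$ for $N=1$ and $0$ otherwise; the second is the M\"obius transform of the completely additive function $v_q$, and since $v_q=\mathbf 1*h$ with $h$ the indicator of the positive powers of $q$, this transform equals $h(N)$, i.e. the indicator that $N$ is a positive power of $q$. Combining gives the displayed formula.

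With the formula in hand the two parts follow quickly. For (i), take $q=p$; since $p\mid n$ we have $v_p(n)=k\ge1$, and $d=\mathrm{ord}_p(a)\mid p-1$ is prime to $p$, so $d\mid m$ and $N=(m/d)p^k$ is a power of $p$ if and only if $d=m$. Hence $p\mid\Phi_n(a)\iff\mathrm{ord}_p(a)=m$, and in that case $N=p^k$ with $k\ge1$ forces $v_p(\Phi_n(a))=1$, i.e. $p^2\nmid\Phi_n(a)$; the divisibility $m\mid p-1$ is just Fermat. For (ii), take a prime $q\ne p$ dividing $\Phi_n(a)$. By the formula $N=n/d$ is a power of $q$; if that power were nontrivial then $q\mid n$, and since $p\mid n$ with $p\ne q$ we would get $p\mid n/q^{v_q(n)}=d=\mathrm{ord}_q(a)\mid q-1$, whence $q\equiv1\pmod p$ and $q>p$, contradicting the maximality of $p$. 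Thus $N=1$, i.e. $\mathrm{ord}_q(a)=n$; then $n\mid q-1$, so $q\equiv1\pmod n$, and because $n\ge2$ this also gives $q\nmid n$. Both sides of the asserted equivalence are therefore automatically satisfied.

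The main obstacle is the case $q=2$, where Lifting the Exponent takes the modified form $v_2(a^e-1)=v_2(a-1)+v_2(a+1)+v_2(e)-1$ for even $e$, breaking the clean additivity used above precisely at the passage from odd to even exponents. This is the source of the well-known exceptional behaviour at $n=2$ with $a\equiv3\pmod4$ (for instance $\Phi_2(3)=4$, so that $p^2\mid\Phi_n(a)$), so the final assertion of (i) genuinely requires $p$ odd (equivalently $n>2$). I would dispose of $p=2$ by the direct computation $v_2(\Phi_{2^k}(a))=v_2(a^{2^{k-1}}+1)$: for $k\ge2$ and $a$ odd one has $a^{2^{k-1}}\equiv1\pmod8$, giving valuation $1$, while $k=1$ is isolated as the sole exception.
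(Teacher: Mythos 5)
The paper offers no proof of this theorem at all --- it is quoted as a classical result from the literature --- so there is no internal argument to compare yours with; your route (M\"obius inversion $\Phi_n(a)=\prod_{e\mid n}(a^e-1)^{\mu(n/e)}$ plus Lifting the Exponent, packaged as an exact formula for $v_q(\Phi_n(a))$ in terms of $N=n/\mathrm{ord}_q(a)$) is a standard and essentially correct way to establish it. The valuation formula is right for odd $q$, the deductions of (i) and (ii) from it are sound, and two of your side observations are worth keeping: the ``moreover'' clause of (ii) is indeed vacuous (both sides hold automatically once $\mathrm{ord}_q(a)=n\geq 2$), and part (i) as literally stated fails at $n=2$ (e.g.\ $\Phi_2(3)=4$, so $p^2\mid\Phi_n(a)$), an exception the paper's statement silently omits and which your direct computation of $v_2(\Phi_{2^k}(a))$ correctly isolates to $k=1$.

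The one genuine gap is in part (ii) when the prime $q$ dividing $\Phi_n(a)$ is $2$ while $p$ is odd. Your valuation formula rests on the LTE identity $v_q(a^e-1)=v_q(a^d-1)+v_q(e/d)$, which you rightly note breaks for $q=2$; but your treatment of the even prime only addresses $p=2$, i.e.\ $n$ a power of $2$, which is a part-(i) issue. In part (ii) you invoke ``the formula'' for an arbitrary prime $q\neq p$ dividing $\Phi_n(a)$, and when that prime is $2$ (so $d=\mathrm{ord}_2(a)=1$ and what must be shown is that $\Phi_n(a)$ is odd whenever $n$ is not a power of $2$) the formula has not been established. The hole is easy to fill: either rerun the M\"obius sum with the corrected $v_2(a^e-1)=v_2(a-1)+v_2(a+1)+v_2(e)-1$ for even $e$, whose extra terms cancel whenever $n$ is not a power of $2$, or simply observe that for odd $a$ one has $\Phi_n(a)\equiv\Phi_n(1)\pmod 2$, and $\Phi_n(1)$ equals $1$ unless $n$ is a prime power, in which case it equals that prime --- odd unless $n$ is a power of $2$. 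With that remark added, your proof is complete.
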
 

\section{Arithmetic prime powers}
The main goal of this section is to find out when the prime-power $p^{k}$ is arithmetic. We will start considering the case when $k+1$ is also a prime power. We have the following result.

\begin{prop}
Let $p$ be a prime and let $k$ be an integer such that $k+1=q^m$ is a prime power. Then $A(p^k)\in\mathbb{Z}$ if and only if $q$ divide $p-1$.
\end{prop}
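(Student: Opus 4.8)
The plan is to start from the explicit product formula for $A(p^k)$ given in the introduction and specialize it to the case $k+1=q^m$. Since the only divisors of $q^m$ different from $1$ are $q,q^2,\dots,q^m$, the formula collapses to
$$A(p^k)=\frac{1}{q^m}\prod_{j=1}^{m}\Phi_{q^j}(p).$$
Because the product $\prod_{j=1}^{m}\Phi_{q^j}(p)=\sigma(p^k)$ is an integer and we are dividing it by the prime power $q^m$, the whole question reduces to a single valuation computation: writing $v_q$ for the exponent of $q$ in the prime factorization, we have $A(p^k)\in\mathbb{Z}$ if and only if $v_q\!\left(\prod_{j=1}^{m}\Phi_{q^j}(p)\right)\geq m$. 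So the task becomes counting how often $q$ divides each cyclotomic value $\Phi_{q^j}(p)$.

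Next I would apply the classical Theorem to $\Phi_{q^j}(p)$, reading it with the base equal to our prime $p$ and the modulus equal to $n=q^j$. For each $j$ the largest prime factor of $n$ is $q$, and $n=q^j\cdot 1$, so the cofactor denoted $m$ in that Theorem equals $1$ here (not to be confused with the exponent $m$ in $k+1=q^m$). Part i) then asserts that $q\mid\Phi_{q^j}(p)$ exactly when $\textrm{ord}_q(p)=1$, i.e.\ when $q\mid p-1$, and that in that event $q^2\nmid\Phi_{q^j}(p)$. Hence each factor contributes a $q$-valuation of exactly $1$ when $q\mid p-1$ and of $0$ otherwise, so summing over $j=1,\dots,m$ gives $v_q\!\left(\prod_{j=1}^{m}\Phi_{q^j}(p)\right)=m$ if $q\mid p-1$ and $=0$ if $q\nmid p-1$. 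Comparing with the threshold $m$ from the first paragraph yields the claimed equivalence in both directions at once: $q\mid p-1$ forces $q^m\mid\sigma(p^k)$, while $q\nmid p-1$ forces $q\nmid\sigma(p^k)$ and hence certainly $q^m\nmid\sigma(p^k)$.

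The one point that needs care — and which I expect to be the main obstacle — is ruling out that $q$ might divide some $\Phi_{q^j}(p)$ for a reason other than part i); without this, the valuation count could be larger than anticipated. This is precisely what part ii) of the Theorem prevents: any prime dividing $\Phi_n(a)$ \emph{other} than the largest prime factor of $n$ must satisfy $\textrm{ord}_q(a)=n$ and in particular cannot equal $q$ when $n=q^j$. Therefore $q$ enters $\Phi_{q^j}(p)$ solely through the mechanism of part i), the valuation computation above is exhaustive, and the proof is complete.
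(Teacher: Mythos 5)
Your proof is correct and follows essentially the same route as the paper: both specialize the cyclotomic product formula to $k+1=q^m$ and invoke Theorem 1 i) (with $n=q^j$, largest prime $q$, cofactor $1$) to get $q\mid\Phi_{q^j}(p)$ for each $j$ exactly when $q\mid p-1$. The only divergence is in the converse, where the paper argues directly that $A(p^k)\in\mathbb{Z}$ forces $p^{q^m}\equiv 1\ (\mathrm{mod}\ q)$ and then uses $\gcd(q^m,q-1)=1$ to conclude $\mathrm{ord}_q(p)=1$, whereas you run the ``only if'' half of Theorem 1 i) to get $q$-valuation zero of the product; both are valid, and your single valuation count handles the two directions uniformly.
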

\begin{proof}
First observe that:
$$A(p^k)=\frac{1}{k+1}\frac{p^{k+1}-1}{p-1}=\frac{1}{q^m}\prod_{1\neq d|q^m}\Phi_d(p)=\frac{1}{q^m}\prod_{j=1}^m\Phi_{q^j}(p).$$
By Theorem 1 i), if $q$ divides $p-1$; i.e., if $\textrm{ord}_q(p)=1$, then $q$ divides $\Phi_{q^j}(p)$ for every $1\leq j\leq m$ and hence $A(p^k)\in\mathbb{Z}$.

Conversely, if $A(p^k)\in\mathbb{Z}$, it follows that $p^{q^m}-1\equiv 0$ (mod $q$). This clearly implies ($q^m$ and $q-1$ being coprime) that $p-1\equiv 0$ (mod $q$) and the result follows.
\end{proof}

Let us introduce some notation. Given an integer $n$ and its prime power decomposition $n=q_1^{m_1}\cdots q_r^{m_r}$, we define $d_j(n):=\gcd(q_j-1,n)$. 

\begin{rem}
If $n=q_1^{m_1}\cdots q_r^{m_r}$ we can assume that $q_1<\cdots<q_r$. If we denote by $n_j=q_1^{m_1}\cdots q_{j-1}^{m_{j-1}}$ ($n_1=1$), it is easy to see that
$$\gcd(q_j-1,n)=\gcd(q_j-1,n_j)=\gcd(q_j-1, n/q_j^{m_j}),$$
because $q_k$ cannot divide $q_j-1$ for any $k\geq j$.
\end{rem}

We can now prove the following result.

\begin{prop}
Let $p$ be a prime and $k$ be any integer. If $k+1=q_1^{m_1}\cdots q_r^{m_r}$ is the prime power decomposition of $k+1$, we have that $A(p^k)\in\mathbb{Z}$ if and only if $q_j|p^{d_j(k+1)}-1$ for every $j=1,\dots,r$.
\end{prop}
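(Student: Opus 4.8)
The plan is to start from the displayed identity
$$A(p^k)=\frac{1}{k+1}\prod_{1\neq d\mid k+1}\Phi_d(p),$$
in which the product is an integer. Since $k+1=q_1^{m_1}\cdots q_r^{m_r}$ with the prime powers $q_j^{m_j}$ pairwise coprime, $A(p^k)\in\mathbb{Z}$ is equivalent to $q_j^{m_j}$ dividing $\prod_{1\neq d\mid k+1}\Phi_d(p)$ for every $j$. So I would fix $j$ and compute the exact power of $q_j$ dividing this product. If $q_j=p$ the computation is immediate: for $d>1$ one has $\Phi_d(p)\equiv\Phi_d(0)=1\pmod p$, so $q_j$ divides no factor, while the right-hand condition $q_j\mid p^{d_j(k+1)}-1$ fails because $p^{d_j(k+1)}\equiv0\pmod p$; both sides are false and this $j$ is dispatched. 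From now on assume $q_j\neq p$ and put $e_j=\mathrm{ord}_{q_j}(p)$, so that $e_j\mid q_j-1$.

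The heart of the argument is to determine, for a divisor $d>1$ of $k+1$, exactly when $q_j\mid\Phi_d(p)$. Writing $\ell$ for the largest prime factor of $d$ and applying Theorem 1, I expect to show that $q_j\mid\Phi_d(p)$ precisely when $d=e_jq_j^{t}$ for some $t\geq0$: if $q_j=\ell$, then part i) forces $e_j=d/q_j^{v_{q_j}(d)}$, i.e.\ $t=v_{q_j}(d)\geq1$; if $q_j\neq\ell$, then part ii) forces $e_j=d$, i.e.\ $t=0$. Moreover, part i) shows that for every such $d$ with $t\geq1$ the prime $q_j$ divides $\Phi_d(p)$ exactly once. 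Setting $M=(k+1)/q_j^{m_j}$, a divisor of the form $e_jq_j^{t}$ divides $k+1$ if and only if $e_j\mid M$ and $0\leq t\leq m_j$.

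I would then match this with the stated condition. Because $e_j\mid q_j-1$, the Remark gives $d_j(k+1)=\gcd(q_j-1,M)$, and $q_j\mid p^{d_j(k+1)}-1$ is equivalent to $e_j\mid d_j(k+1)$, hence to $e_j\mid M$. Thus $q_j\mid p^{d_j(k+1)}-1$ holds exactly when divisors of the form $e_jq_j^{t}$ of $k+1$ exist. When it fails, $q_j$ divides none of the factors and the $q_j$-adic valuation of the product is $0<m_j$. When it holds, the divisors $e_jq_j^{t}$ with $1\leq t\leq m_j$ all divide $k+1$ and each contributes exactly one factor of $q_j$, so the valuation is at least $m_j$ (with a possible extra contribution from $d=e_j$ when $e_j>1$, which is harmless). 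Hence $q_j^{m_j}$ divides the product if and only if $q_j\mid p^{d_j(k+1)}-1$, and requiring this for every $j$ yields the claim.

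The step I expect to be the main obstacle is the precise classification of the $d$ with $q_j\mid\Phi_d(p)$ together with the exactness of the multiplicity: one must check carefully that Theorem 1 is applied with $q_j$ playing the role of ``the largest prime factor'' exactly when $t\geq1$ (so that the non-divisibility by $q_j^2$ in part i) applies and forces multiplicity one), and the role of ``another prime'' when $t=0$. Everything else---the reduction to a single $q_j$, the translation of the divisibility condition through the Remark, and the final counting---is routine once this classification is in place; note in particular that the exact value of $v_{q_j}(\Phi_{e_j}(p))$ is never needed, only the $m_j$ unit contributions coming from $t\geq1$.
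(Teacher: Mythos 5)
Your proof is correct and, at its core, takes the same route as the paper: the decisive step in both is that the divisors $\mathrm{ord}_{q_j}(p)\,q_j^{t}$ of $k+1$ (for $1\le t\le m_j$) each contribute exactly one factor of $q_j$ to $\prod_{1\ne d\mid k+1}\Phi_d(p)$ by Theorem 1 i), after translating $q_j\mid p^{d_j(k+1)}-1$ into $\mathrm{ord}_{q_j}(p)\mid (k+1)/q_j^{m_j}$ via the Remark. Your write-up is somewhat more thorough---you classify exactly which $\Phi_d(p)$ are divisible by $q_j$ and dispose of the degenerate case $q_j=p$, whereas the paper's forward direction is the shorter observation that $q_j\mid p^{k+1}-1$ already forces $q_j\mid p^{\gcd(q_j-1,k+1)}-1$---but this is a refinement, not a different argument.
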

\begin{proof}
In this case $A(p^k)=\displaystyle{\frac{1}{k+1}\prod_{1\neq d|k+1}\Phi_d(p)}$. If $A(p^k)\in\mathbb{Z}$ it follows that $q_j$ divides $p^{k+1}-1$ for every $j$. This imples that $q_j$ also divides $p^{\gcd(q_j-1,k+1)}-1$ as claimed.

Conversely, assume that $q_j|p^{d_j(k+1)}-1$ for every $j=1,\dots,r$. This implies that $\textrm{ord}_{q_j}(p)$ divides $d_j(k+1)$. Now, if we put $D_{(j,i)}=\textrm{ord}_{q_j}(p)q_j^{i}$ we have that $D_{(j,i)}$ is a divisor of $k+1$ and $q_j$ is its largest prime factor (see the previous remark). We can thus apply Theorem 1 i) to conclude that $q_j$ divides $\Phi_{D_{(j,i)}}(p)$ for every $1\leq j\leq r$ and for every $1\leq i\leq m_j$. Hence $q_j^{m_j}$ divides $\displaystyle{\prod_{1\neq d|n+1}\Phi_d(p)}$ for every $j$ and the proof is complete.
\end{proof}

Recall that he \textit{radical} of an integer is defined to be its largest square-free divisor. Namely, if $n=q_1^{m_1}\cdots q_r^{m_r}$ then $\textrm{rad}(n)=q_1\dots q_r$. Now, let us define $\Delta (n)=\textrm{lcm}(d_1(n),\dots, d_r(n))$. Recalling the definition of $d_j(n)$ it is clear that $\Delta(n)=\gcd(n,\textrm{lcm}(q_j-1))=\gcd(n,\lambda(\textrm{rad}(n)))$, where $\lambda$ is Carmichael's function (see A173751 in OEIS). 

Observe that, from the definition of $d_j(n)$, we have that $\Delta(n)=q_1^{\mu_1}\cdots q_{r-1}^{\mu_{r-1}}$ with $0\leq \mu_j\leq m_j$ for all $j=1,\dots, r-1$. This observation allows us to prove the following lemma.

\begin{lem}
For every $j=1,\dots, r$, we have that $d_j(n)=\gcd(q_j-1,\Delta(n)/q_j^{\mu_j})$
\end{lem}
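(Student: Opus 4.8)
The plan is to argue one prime at a time, comparing the $q_i$-adic valuations of the two sides. Throughout, write $v_i(\cdot)$ for the exponent of $q_i$ in an integer. First I would record an explicit formula for the exponents $\mu_i=v_i(\Delta(n))$. Since $\Delta(n)=\textrm{lcm}(d_1(n),\dots,d_r(n))$ and $d_l(n)=\gcd(q_l-1,n)$, one has $v_i(d_l(n))=\min(v_i(q_l-1),m_i)$ when $l>i$, whereas $v_i(d_l(n))=0$ when $l\le i$ (because then $q_i\ge q_l>q_l-1$ cannot divide $q_l-1$). Taking the maximum over $l$ and using the elementary identity $\max_l\min(x_l,m)=\min(\max_l x_l,m)$, I obtain $\mu_i=\min(m_i,M_i)$, where $M_i:=\max_{l>i}v_i(q_l-1)$. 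For $i=r$ the index set $\{l>i\}$ is empty and $\mu_r=0$, which is the correct reading of the statement for $j=r$, since $\Delta(n)$ carries no factor of $q_r$.

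Next I would fix $j$ and reduce the claimed identity to a comparison of valuations. Every prime divisor of $q_j-1$ is smaller than $q_j$ and hence lies among $q_1,\dots,q_{j-1}$; in particular $q_j$ does not divide $q_j-1$, so dividing $\Delta(n)$ by $q_j^{\mu_j}$ does not change its gcd with $q_j-1$. Consequently both $d_j(n)=\gcd(q_j-1,n)$ and $\gcd(q_j-1,\Delta(n)/q_j^{\mu_j})$ are products over $i<j$ of $q_i$ raised to the powers $\min(v_i(q_j-1),m_i)$ and $\min(v_i(q_j-1),\mu_i)$ respectively. Thus the lemma follows once I establish, for every $i<j$, the equality
$$\min(v_i(q_j-1),m_i)=\min(v_i(q_j-1),\mu_i).$$

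To finish I would verify this elementary equality. Put $a=v_i(q_j-1)$; since $j>i$, the index $l=j$ occurs in the maximum defining $M_i$, so $a\le M_i$. If $a\le m_i$, then $a\le\min(m_i,M_i)=\mu_i$ and both sides equal $a$. If instead $a>m_i$, then $M_i\ge a>m_i$ forces $\mu_i=\min(m_i,M_i)=m_i$, and both sides equal $m_i$. In either case the two minima agree, completing the argument. The only genuinely delicate point is the bookkeeping of the first paragraph---pinning down the formula $\mu_i=\min(m_i,M_i)$ and, crucially, noticing that the index $l=j$ always lies in the range $l>i$, so that $a\le M_i$; once this is in place, the remaining min--max manipulation is routine.
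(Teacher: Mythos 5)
Your proof is correct and rests on the same two facts as the paper's own argument: that $d_j(n)$ divides $\Delta(n)$ (in your bookkeeping, the observation $v_i(q_j-1)\le M_i$ because the index $l=j$ occurs in the maximum) and that $\mu_i\le m_i$; the paper phrases this as mutual divisibility of the two sides, whereas you verify equality of $q_i$-adic valuations prime by prime after first deriving the explicit formula $\mu_i=\min(m_i,M_i)$, which the paper never needs. A minor bonus of your version is that it explicitly handles the edge case $j=r$, where $\mu_r$ must be read as $0$ since $q_r$ does not occur in $\Delta(n)$.
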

\begin{proof}
By definition $d_j(n)$ divides $\Delta(n)$ and since $q_j$ cannot appear in its prime power decomposition, it clearly divides $\Delta(n)/q_j^{\mu_j}$. On the other hand $\gcd(q_j-1,\Delta(n)/q_j^{\mu_j})$ must be of the form $q_1^{e_1}\dots q_{j-1}^{e_{j-1}}$ with $0\leq e_j\leq \mu_j\leq m_j$. Thus, it divides $n_j$ and the result follows.
\end{proof}

From the previous corollary it follows readily that if a prime power $p^{k}$ is arithmetic, then $\textrm{rad}(k+1)$ divides $p^{\Delta(k+1)}-1$. The main result of this section is the following theorem which proves that the converse is also true.

\begin{teor}
Let $p$ be a prime and $k$ be an integer. Then, $A(p^k)\in\mathbb{Z}$ if and only if $\textrm{rad}(k+1)$ divides $p^{\Delta(k+1)}-1$.
\end{teor}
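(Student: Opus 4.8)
The plan is to reduce everything to the characterization already obtained in Proposition 2, namely that $A(p^k)\in\mathbb{Z}$ if and only if $q_j\mid p^{d_j(k+1)}-1$ for every $j=1,\dots,r$, and then to translate the divisibility $\textrm{rad}(k+1)\mid p^{\Delta(k+1)}-1$ into the same type of congruence conditions. Writing $n=k+1=q_1^{m_1}\cdots q_r^{m_r}$ with $q_1<\cdots<q_r$, I first observe that since the $q_j$ are distinct primes, $\textrm{rad}(n)=q_1\cdots q_r$ divides $p^{\Delta(n)}-1$ precisely when $q_j\mid p^{\Delta(n)}-1$ for each $j$. Thus the theorem amounts to proving, for every $j$, the equivalence between $q_j\mid p^{d_j(n)}-1$ and $q_j\mid p^{\Delta(n)}-1$.

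The central step I would isolate first is the identity $\gcd(q_j-1,\Delta(n))=d_j(n)$. To see this, recall from the observation preceding the Lemma that $\Delta(n)=q_1^{\mu_1}\cdots q_{r-1}^{\mu_{r-1}}$, so that $\Delta(n)=q_j^{\mu_j}\cdot\bigl(\Delta(n)/q_j^{\mu_j}\bigr)$. Since $q_j\nmid q_j-1$, the factor $q_j^{\mu_j}$ is coprime to $q_j-1$ and contributes nothing to the gcd; hence $\gcd(q_j-1,\Delta(n))=\gcd(q_j-1,\Delta(n)/q_j^{\mu_j})$, which is exactly $d_j(n)$ by the Lemma. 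This identity is the crux of the argument and the step I expect to require the most care, precisely because it is where the $q_r$-component of $n$ drops out and the Lemma is genuinely used.

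With this identity in hand both implications become short. For the forward direction (already indicated in the text): since $d_j(n)\mid\Delta(n)$, we have $p^{d_j(n)}-1\mid p^{\Delta(n)}-1$, so $q_j\mid p^{d_j(n)}-1$ forces $q_j\mid p^{\Delta(n)}-1$. For the converse, assume $q_j\mid p^{\Delta(n)}-1$. Then $\gcd(p,q_j)=1$, so Fermat's little theorem gives $\textrm{ord}_{q_j}(p)\mid q_j-1$, while the hypothesis gives $\textrm{ord}_{q_j}(p)\mid\Delta(n)$. Hence $\textrm{ord}_{q_j}(p)$ divides $\gcd(q_j-1,\Delta(n))=d_j(n)$, which yields $q_j\mid p^{d_j(n)}-1$. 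Combining the two directions over all $j$ with Proposition 2 completes the proof.
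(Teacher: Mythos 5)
Your proof is correct and follows essentially the same route as the paper's: both directions reduce to Proposition 2, and the key step in the converse is using the Lemma to identify $d_j(k+1)$ with a gcd involving $\Delta(k+1)$ so that $\textrm{ord}_{q_j}(p)\mid\Delta(k+1)$ and $\textrm{ord}_{q_j}(p)\mid q_j-1$ together force $q_j\mid p^{d_j(k+1)}-1$. Your intermediate identity $\gcd(q_j-1,\Delta(n))=d_j(n)$ is just a slightly tidier packaging of the paper's congruence $p^{\Delta(k+1)}\equiv p^{\Delta(k+1)/q_j^{\mu_j}}\pmod{q_j}$, which rests on the same observation that the $q_j$-part of $\Delta(k+1)$ is coprime to $q_j-1$.
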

\begin{proof}
If $\textrm{rad}(k+1)$ divides $p^{\Delta(k+1)}-1$, then $q_j$ divides $p^{\Delta(k+1)}-1$ for every $j$. Since $p^{\Delta(k+1)}\equiv p^{\Delta(k+1)/q_j^{\mu_j}} \equiv 1\equiv p^{q_j-1}$ (mod $q_j$) it follows that $q_j$ divides $p^{\gcd(q_j-1,\Delta(k+1)/q_j^{\mu_j})}-1$ so it is enough to apply the previous lemma together with Proposition 2.
\end{proof}

The rest of the section will be devoted to present some applications of the previous results.

In \cite{THR}, the arithmetic mean of the core divisors of a number $A^{*}(n)$ is considered, where a \textit{core divisor} is one which is a multiple of $\textrm{rad}(n)$. Among other results it is proved that $A^{*}(p^{p})$ is integral for any prime $p$. Let us see that if $p\neq 2$ this result also holds when considering all the divisors.

\begin{prop}
If $p$ is an odd prime, then $p^{p}$ is arithmetic.
\end{prop}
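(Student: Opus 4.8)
The plan is to apply Proposition 2 directly to $n=p^{p}$, for which $k=p$ and $k+1=p+1$. Since $p$ is odd, $p+1$ is even, so I would write its prime-power decomposition as $p+1=2^{m_1}q_2^{m_2}\cdots q_r^{m_r}$ with $q_2<\cdots<q_r$ odd primes. By Proposition 2 it suffices to verify that $q_j\mid p^{d_j(p+1)}-1$ for each $j$, where $d_j(p+1)=\gcd(q_j-1,p+1)$.

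The crux is the behaviour of $p$ modulo each odd prime factor of $p+1$. First I would observe that if $q_j$ is an odd prime dividing $p+1$, then $p\equiv -1\pmod{q_j}$, and since $q_j>2$ this gives $p\not\equiv 1\pmod{q_j}$ while $p^{2}\equiv 1\pmod{q_j}$, so $\textrm{ord}_{q_j}(p)=2$. On the other hand, $q_j-1$ is even and $p+1$ is even, hence $2\mid\gcd(q_j-1,p+1)=d_j(p+1)$. Therefore $\textrm{ord}_{q_j}(p)=2$ divides $d_j(p+1)$, which is exactly the condition $q_j\mid p^{d_j(p+1)}-1$.

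It then remains to treat the prime $2$, which is the one exceptional step. Here $d_1(p+1)=\gcd(2-1,p+1)=1$, so the required divisibility reduces to $2\mid p^{1}-1=p-1$; this holds precisely because $p$ is odd. Having checked the condition of Proposition 2 for every prime factor of $p+1$, we conclude that $A(p^{p})\in\mathbb{Z}$, i.e. that $p^{p}$ is arithmetic.

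The only real obstacle is the prime $2$: because $q_1-1=1$ the ``order $=2$'' argument collapses there and must be replaced by the elementary remark that $p$ is odd. Equivalently, one could run the argument through Theorem 3, checking that $\textrm{rad}(p+1)$ divides $p^{\Delta(p+1)}-1$; then the delicate point is to show $2\mid\Delta(p+1)$ whenever $p+1$ has an odd prime factor, while the case $p+1=2^{m_1}$ (that is, $p$ a Mersenne prime) forces $\Delta(p+1)=1$ and is again settled by $p$ being odd. Once the prime $2$ is isolated, the odd-prime part is immediate from the observation $\textrm{ord}_{q_j}(p)=2$, so no genuinely hard computation is involved.
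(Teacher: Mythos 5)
Your proof is correct and rests on the same key observation as the paper's: $p\equiv -1\pmod{q_j}$ for every odd prime $q_j$ dividing $p+1$, combined with the evenness of the relevant exponent, with the prime $2$ handled by the parity of $p$. The only difference is that you route the argument through Proposition 2 (checking each $d_j(p+1)$ individually) rather than through Theorem 2 via $\Delta(p+1)$, which has the minor advantage of absorbing the case $p+1=2^{m}$ into the uniform argument instead of treating it separately with Proposition 1 as the paper does.
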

\begin{proof}
We can write $p+1=q_1^{m_1}\cdots q_r^{m_r}$ with $q_1=2$. Since $q_j-1$ is even for every $2\leq j\leq r$, it follows that $\Delta(p+1)$ is also even. Observe that $p\equiv -1\ (\textrm{mod}\ q_j)$ and thus, $p^{\Delta(p+1)}\equiv (-1)^{\Delta(p+1)}\equiv 1\ (\textrm{mod}\ q_j)$. 

The previous reasoning does not work if $r=1$, but in such case $p+1$ is a power of 2 and it is enough to apply Proposition 1 since $p-1$ is even.
\end{proof}

Of course, if $p,q$ are odd primes, $p^q$ is not arithmetic in general; e.g., $A(3^5)=\frac{182}{3}$. Nevertheless we have the following proposition which was already suggested by the proof of the previous one.

\begin{prop}
If $p$ is an odd prime and $m$ is a Mersenne number, then $p^m$ is arithmetic. In particular $p^q$ is arithmetic for every Mersenne prime $q$.
\end{prop}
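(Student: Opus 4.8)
The plan is to recognize this as an immediate corollary of Proposition~1. Since $m$ is a Mersenne number, we have $m=2^s-1$ for some integer $s\geq 1$, and therefore $m+1=2^s$ is a power of the single prime $2$. Setting $k=m$, the integer $k+1=m+1=2^s$ is thus a prime power, which is precisely the situation covered by Proposition~1 (with the prime in that statement being $q=2$).

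Applying Proposition~1, I would conclude that $A(p^m)\in\mathbb{Z}$ if and only if $2$ divides $p-1$. Because $p$ is an odd prime, $p-1$ is even, so this divisibility holds automatically, and hence $p^m$ is arithmetic. For the final assertion, I would simply observe that a Mersenne prime is by definition a prime of the form $2^s-1$, so it is in particular a Mersenne number; applying the first part with the Mersenne number taken to be the prime $q$ then yields that $p^q$ is arithmetic for every odd prime $p$ and every Mersenne prime $q$.

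There is no genuine obstacle here, since the result falls out of Proposition~1 in one line; the only point requiring a little care is the clash of notation, as the letter $m$ denotes the Mersenne number in the present statement but plays the role of the exponent in the prime-power factorization inside Proposition~1. One could equally argue via Theorem~2: for $k+1=2^s$ one has $\textrm{rad}(k+1)=2$ and $\Delta(k+1)=\gcd(2^s,\lambda(2))=1$, so the criterion $\textrm{rad}(k+1)\mid p^{\Delta(k+1)}-1$ reduces to $2\mid p-1$, again automatic for odd $p$. I would present the Proposition~1 route as the main argument, as it is the most direct.
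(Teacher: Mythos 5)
Your proposal is correct and is essentially identical to the paper's proof: both observe that $m+1=2^s$ is a power of $2$ and that $p-1$ is even for odd $p$, then invoke Proposition~1. The extra remark about the Theorem~2 route is fine but not needed.
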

\begin{proof}
In this case $m+1$ is a power of 2 and $p-1$ is even, so we can apply Proposition 1. 
\end{proof}

Before we pass to the following section we will see that, in fact, the previous proposition gives us an interesting characterization of Mersenne numbers.

\begin{cor}
Let $m$ be any integer. Then $p^m$ is arithmetic for every odd prime $p$ if and only if $m$ is a Mersenne number.
\end{cor}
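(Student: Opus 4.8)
The plan is to handle the two implications separately, observing that one of them is already established. The implication ``$m$ is a Mersenne number $\Rightarrow$ $p^m$ is arithmetic for every odd prime $p$'' is exactly the content of Proposition 5, so there is nothing new to prove in that direction: if $m=2^t-1$ then $m+1$ is a power of $2$, and since $p-1$ is even for every odd prime $p$, Proposition 1 yields the conclusion.

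For the converse I would argue by contraposition, producing a single odd prime $p$ for which $p^m$ fails to be arithmetic whenever $m$ is not Mersenne. If $m$ is not a Mersenne number, then $m+1=q_1^{m_1}\cdots q_r^{m_r}$ is not a power of $2$, so it has an odd prime factor $q$; write $q=q_j$ for the corresponding index. The key observation is that choosing $p=q$ itself destroys the divisibility criterion of Proposition 2. Indeed, $d_j(m+1)=\gcd(q_j-1,m+1)\geq 1$, and therefore $p^{d_j(m+1)}=q^{d_j(m+1)}\equiv 0 \pmod{q_j}$, so that $p^{d_j(m+1)}-1\equiv -1\not\equiv 0 \pmod{q_j}$. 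Thus the condition of Proposition 2 fails at the index $j$, and $q^m$ is not arithmetic. (Equivalently, one may invoke Theorem 4 directly: with $p=q$ the prime $q$ divides $\textrm{rad}(m+1)$ but cannot divide $q^{\Delta(m+1)}-1$, so $\textrm{rad}(m+1)\nmid p^{\Delta(m+1)}-1$.) This shows that $p^m$ is \emph{not} arithmetic for every odd prime $p$, completing the contrapositive.

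I do not anticipate any genuine obstacle here; the corollary is essentially a remark extracted from the criteria already proved. The one point worth stating cleanly is the underlying mechanism: among all odd primes $p$, those dividing $m+1$ automatically violate the arithmeticity criterion, since a prime $q_j$ can never divide $p^{d}-1$ when $p=q_j$. Consequently, requiring $p^m$ to be arithmetic for \emph{every} odd prime $p$ forces $m+1$ to have no odd prime divisor whatsoever, i.e. to be a power of $2$, which is precisely the assertion that $m$ is a Mersenne number.
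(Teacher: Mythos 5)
Your proof is correct and rests on the same mechanism as the paper's: an odd prime $q$ dividing $m+1$ can never divide $p^{\ell}-1$ once $p$ is chosen appropriately, forcing $m+1$ to be a power of $2$. The paper phrases this directly (arithmeticity gives $m+1\mid p^{m+1}-1$, hence $\gcd(m+1,p)=1$ for every odd prime $p$) rather than contrapositively via Proposition 2, but the two arguments are logically the same; note only that the results you cite are Proposition 4 and Theorem 2 in the paper's numbering.
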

\begin{proof}
If $p^m$ is arithmetic for every odd prime, then $m+1$ divides $p^{m+1}-1$ which implies that $\gcd(m+1,p)=1$. Thus $m+1$ must be a power of 2 as desired. The converse is given by the previous proposition.
\end{proof}

\section{The general case}

To give general conditions for any integer $n$ to be arithmetic is a more difficult task. Since $A(n)$ is an arithmetic function we can use the results given in the previous section to obtain the following strightforward result.

\begin{cor}
Let $p_1,\dots, p_r$ be odd prime numbers and let $n_1,\dots, n_r$ be integers such that $\textrm{rad}(n_j+1)$ divides $p_j^{\Delta(n_j+1)}-1$ for every $1\leq j\leq r$. If $N=p_1^{n_1}\cdots p_r^{n_r}$, then $A(N)\in\mathbb{Z}$. In particular $p_1^{p_1}\cdots p_r^{p^r}$ is arithmetic and, if $m_1,\dots, m_r$ are Mersenne numbers then $p_1^{m_1}\cdots p_r^{m_r}$ is also arithmetic.
\end{cor}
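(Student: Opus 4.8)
The plan is to exploit the multiplicativity of the function $A$. First I would recall that $A(n)=\sigma(n)/\tau(n)$ is a quotient of two multiplicative functions and is therefore itself multiplicative: for coprime integers $a,b$ one has $A(ab)=\sigma(ab)/\tau(ab)=[\sigma(a)\sigma(b)]/[\tau(a)\tau(b)]=A(a)A(b)$. Since $p_1,\dots,p_r$ are distinct primes, the prime powers $p_1^{n_1},\dots,p_r^{n_r}$ are pairwise coprime, and hence $A(N)=\prod_{j=1}^r A(p_j^{n_j})$.

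The second step is to show that each factor is an integer. The hypothesis asserts precisely that $\textrm{rad}(n_j+1)$ divides $p_j^{\Delta(n_j+1)}-1$ for every $j$, which by Theorem 2 is equivalent to $A(p_j^{n_j})\in\mathbb{Z}$. Thus $A(N)$ is a product of integers and therefore an integer, which proves the first assertion.

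For the two ``in particular'' claims I would only need to verify that the hypothesis is satisfied in each case. For $N=p_1^{p_1}\cdots p_r^{p_r}$, Proposition 3 guarantees that each $p_j^{p_j}$ is arithmetic, since each $p_j$ is an odd prime; this is exactly the required condition. For $N=p_1^{m_1}\cdots p_r^{m_r}$ with each $m_j$ a Mersenne number, Proposition 4 guarantees that each $p_j^{m_j}$ is arithmetic. In both situations the first part then yields $A(N)\in\mathbb{Z}$.

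There is no genuine obstacle here: the entire argument rests on the multiplicativity of $A$ combined with Theorem 2, which reduces the many-prime statement to the single-prime-power case already settled in the previous section. The only point deserving explicit mention is that the distinctness of the $p_j$ is what makes the corresponding prime powers coprime, so that multiplicativity applies; the two special cases are then immediate appeals to Propositions 3 and 4.
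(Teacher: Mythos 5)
Your proof is correct and is exactly the argument the paper intends: the corollary is stated there without proof as a ``straightforward result,'' the implicit reasoning being precisely the multiplicativity of $A$ on the pairwise coprime factors $p_j^{n_j}$, combined with Theorem 2 for the general hypothesis and Propositions 3 and 4 for the two special cases. No gaps.
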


In \cite{ORE} the square-free case was completely solved since it easily follows from the definition of $A(n)$ that an odd square-free number is always arithmetic and an even square-free number is arithmetic if and only if one of its prime divisors is of the form $4k-1$. In \cite{THR} it was proved that $A^{*}(n)$ is integral if $n$ is cube-free. Of course this fact does not remain true when considering all the divisors of $n$; e.g., $A(75)=\frac{62}{3}$. We will start the section characterizing cube-free arithmetic numbers. To do so we first need to prove the following technical lemma.

\begin{lem}
Let $p$ be a prime. If 3 divides $1+p+p^2$, then $9$ does not divide $1+p+p^2$.
\end{lem}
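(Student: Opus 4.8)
The plan is to first pin down exactly when the hypothesis $3\mid 1+p+p^2$ can hold, and then to extract the exact power of $3$ dividing $1+p+p^2$ by a direct substitution.

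First I would reduce $p$ modulo $3$. Running through the three residue classes, one checks that $1+p+p^2\equiv 1,0,1\pmod 3$ according as $p\equiv 0,1,2\pmod 3$. Hence the hypothesis that $3$ divides $1+p+p^2$ forces $p\equiv 1\pmod 3$; in particular $p\neq 3$, so there is no issue with $p$ itself being divisible by $3$.

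Next, writing $p=3t+1$ for some integer $t$, I would expand directly:
$$1+p+p^2=1+(3t+1)+(3t+1)^2=3+9t+9t^2=3(1+3t+3t^2).$$
The second factor $1+3t+3t^2$ is congruent to $1\pmod 3$, hence is not divisible by $3$. Therefore the exact power of $3$ dividing $1+p+p^2$ is $3^1$, and in particular $9$ does not divide $1+p+p^2$, as claimed.

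There is no serious obstacle here: the argument is an elementary computation and does not even use the primality of $p$. I note, finally, that the result is in fact a special case of Theorem 1 i): since $1+p+p^2=\Phi_3(p)$ and $3$ is the largest (indeed only) prime factor of $3=3^1\cdot 1$, the concluding clause of that theorem asserts that $3^2$ cannot divide $\Phi_3(p)$, which is precisely the statement. The self-contained substitution above is preferable only for being completely explicit.
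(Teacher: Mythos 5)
Your argument is correct and is essentially the paper's own proof: both reduce to $p\equiv 1\pmod 3$, substitute $p=3t+1$, and observe that $1+p+p^2=3(1+3t+3t^2)$ with the second factor prime to $3$. Your closing observation that the lemma is also a special case of Theorem 1 i) applied to $\Phi_3(p)$ is a valid alternative the paper does not mention, but the substitution is the argument actually used.
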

\begin{proof}
Recall that 3 divides $1+p+p^2$ if and only if $p=3k+1$, but it that case $1+p+p^2=9k^2+9k+3$ is not a multiple of 9.
\end{proof}

\begin{prop}
Let $n=2^a3^bp_1\cdots p_rq_1^2\cdots q_s^2$ be a cube-free integer. Let $\alpha=\textrm{card}\{q_i\ |\ q_i\equiv 2\ (mod\ 3)\}$. Then:
\begin{itemize}
\item If $a\neq 1$, then $A(n)\in\mathbb{Z}$ if and only if $3^{\alpha+\left[\frac{a}{2}\right]+\left[\frac{b}{2}\right]}$ divides $\prod(p_i+1)$.
\item If $a=1$ and $b\neq 1$, then $A(n)\in\mathbb{Z}$ if and only if $3^{\alpha-1+\left[\frac{b}{2}\right]}$ divides $\prod(p_i+1)$ and there exists $j\in\{1,\dots,r\}$ such that $p_j=4k-1$.
\item If $a=b=1$, then $A(n)\in\mathbb{Z}$ if and only if $3^{\alpha-1}$ divides $\prod(p_i+1)$.
\end{itemize}
\end{prop}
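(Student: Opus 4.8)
The plan is to exploit the multiplicativity of $A=\sigma/\tau$. Since both $\sigma$ and $\tau$ are multiplicative, so is $A$, and the cube-free hypothesis gives the clean factorization
$$A(n)=A(2^a)\,A(3^b)\,\prod_{i=1}^r A(p_i)\,\prod_{i=1}^s A(q_i^2).$$
First I would record each factor explicitly: $A(p_i)=\tfrac{p_i+1}{2}$, $A(q_i^2)=\tfrac{1+q_i+q_i^2}{3}$, together with the finitely many fixed values $A(2)=\tfrac32$, $A(4)=\tfrac73$, $A(3)=2$, $A(9)=\tfrac{13}{3}$ (for $a,b\in\{0,1,2\}$). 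The key structural observation is that every factor, in lowest terms, has denominator supported only on the primes $2$ and $3$; hence $A(n)\in\mathbb{Z}$ if and only if $v_2(A(n))\geq 0$ and $v_3(A(n))\geq 0$, where $v_\ell$ denotes the $\ell$-adic valuation. The entire argument then reduces to computing these two valuations as sums over the factors.

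For the $2$-adic part, each $A(p_i)=\tfrac{p_i+1}{2}$ has $v_2=v_2(p_i+1)-1\geq 0$, with a strict surplus ($v_2\geq 1$) exactly when $4\mid p_i+1$, i.e. when $p_i=4k-1$; each $A(q_i^2)$ has odd numerator and denominator, so contributes $0$. The only negative $2$-contribution is $v_2(A(2))=-1$ arising when $a=1$, while $A(3)=2$ contributes $+1$ when $b=1$. Collecting terms, $v_2(A(n))\geq 0$ is automatic unless $a=1$ and $b\neq 1$, in which case it holds if and only if $\sum_i\bigl(v_2(p_i+1)-1\bigr)\geq 1$, i.e. if and only if some $p_j=4k-1$. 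This reproduces precisely the $4k-1$ clause in the second bullet and its absence in the first and third.

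The $3$-adic part carries the main content and is where the technical lemma is used. For $q_i\equiv 1\ (\mathrm{mod}\ 3)$ we have $3\mid 1+q_i+q_i^2$, and the lemma guarantees $9\nmid 1+q_i+q_i^2$, so $v_3(A(q_i^2))=0$; for $q_i\equiv 2\ (\mathrm{mod}\ 3)$ the numerator is $\equiv 1\ (\mathrm{mod}\ 3)$ and $v_3(A(q_i^2))=-1$. Thus the $q_i$ contribute exactly $-\alpha$. The $p_i$ contribute $\sum_i v_3(p_i+1)=v_3\bigl(\prod_i(p_i+1)\bigr)$, and a short table over $a,b\in\{0,1,2\}$ gives $v_3(A(3^b))=-\left[\tfrac{b}{2}\right]$ in every case, and $v_3(A(2^a))=-\left[\tfrac{a}{2}\right]$ except that $v_3(A(2))=+1$. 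Therefore $v_3(A(n))\geq 0$ is equivalent to
$$v_3\Bigl(\prod_{i}(p_i+1)\Bigr)\ \geq\ \alpha-v_3(A(2^a))-v_3(A(3^b)),$$
and substituting the tabulated values yields the exponents $\alpha+\left[\tfrac{a}{2}\right]+\left[\tfrac{b}{2}\right]$ when $a\neq 1$, $\alpha-1+\left[\tfrac{b}{2}\right]$ when $a=1,\,b\neq 1$, and $\alpha-1$ when $a=b=1$.

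Finally I would combine the two valuation conditions in each of the three cases to obtain the stated equivalences. The step requiring the most care is the bookkeeping of $v_3(A(2^a))$ and $v_3(A(3^b))$ across the small cases: the value $A(2)=\tfrac32$ behaves oppositely to $A(4)=\tfrac73$ and $A(9)=\tfrac{13}{3}$, contributing $+1$ rather than $-1$ to the $3$-valuation while contributing $-1$ to the $2$-valuation, and it is exactly this sign anomaly at $a=1$ that forces the case split of the proposition. One must also observe that when the resulting exponent is $\leq 0$ (for instance $\alpha=0$) the divisibility condition is vacuous, so the statement should be read as $v_3\bigl(\prod(p_i+1)\bigr)$ being at least the indicated exponent.
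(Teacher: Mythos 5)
Your proposal is correct and follows essentially the same route as the paper: both factor $A(n)$ multiplicatively into the local pieces $\frac{\sum 2^i}{a+1}$, $\frac{\sum 3^i}{b+1}$, $\frac{p_i+1}{2}$, $\frac{1+q_i+q_i^2}{3}$ and invoke the lemma that $9\nmid 1+q+q^2$ together with the oddness of $1+q_i+q_i^2$. The paper leaves the $2$-adic and $3$-adic bookkeeping as an exercise ("it is enough to apply Proposition 1 and the previous lemma"), whereas you carry it out explicitly and correctly, including the sign anomaly of $A(2)=\tfrac{3}{2}$ that forces the case split.
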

\begin{proof}
Observe that $\displaystyle{A(n)=\frac{\sum_{i=0}^{a}2^i}{a+1}\frac{\sum_{i=0}^{b}3^i}{b+1}\prod_{i=1}^r\frac{p_i+1}{2}\prod_{i=1}^s\frac{1+q_i+q_i^2}{3}}$, with $0\leq a,b\leq 2$ and where the third factor is always an integer. Then it is enough to apply Proposition 1 and the previous lemma; also noting that $1+q_i+q_i^2$ is always odd.
\end{proof}

Before we proceed let us introduce some notation. If $N=p_1^{n_1}\cdots p_r^{n_r}$, let $\{q_1<\cdots<q_s\}$ be the set of primes appearing in the factorizations of $n_1+1,\dots,n_r+1$. Thus, for every $i\in\{1,\dots,r\}$ we can put $n_i+1=q_1^{a_{i,1}}\cdots q_s^{a_{i,s}}$ with $0\leq a_{i,j}$. Also, for every $i\in\{1,\dots,r\}$ and $j\in\{1,\dots,s\}$, let us define $\alpha_{i,j}=\textrm{ord}_{q_i}(p_j)$. Observe that $\alpha_{i,j}$ cannot contain any prime larger than $q_i$ because $\alpha_{i,j}|q_i-1$. We also introduce the following sets for every $i\in\{1,\dots,s\}$:
$$J(i):=\{j : \alpha_{i,j}|n_j+1\},$$
$$E(i):=\{j : q_i|n_j+1\}.$$
Finally, for every integer $n$ and prime $p$, $|n|_p$ denotes the exponent of $p$ in the prime power decomoposition of $n$.

With this notation we have the following result.

\begin{teor}
$A(N)\in\mathbb{Z}$ if and only if the following conditions hold for every $i$:
\begin{itemize}
\item[a)] $J(i)\neq\emptyset$,
\item[b)] $\displaystyle{\sum_{j=1}^{r} a_{j,i}\leq \sum_{j\in J(i)\cap E(i)} a_{j,i} + \sum_{j\in J(i)\setminus E(i)} \left|\prod_{1\neq d|n_j+1}\Phi_d(p_j)\right|_{q_i}.}$
\end{itemize}
\end{teor}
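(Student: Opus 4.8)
The plan is to reduce the integrality of $A(N)$ to finitely many $q_i$-adic inequalities and to evaluate each one through Theorem 1. Since $\sigma$ and $\tau$ are multiplicative, so is $A$, whence
\[
A(N)=\prod_{j=1}^{r}A(p_j^{n_j})=\prod_{j=1}^{r}\frac{1}{n_j+1}\prod_{1\neq d\mid n_j+1}\Phi_d(p_j).
\]
The denominator $\tau(N)=\prod_{j}(n_j+1)$ has prime divisors exactly $q_1,\dots,q_s$, so for a prime $q\notin\{q_1,\dots,q_s\}$ nothing can go wrong, and $A(N)\in\mathbb{Z}$ if and only if $|A(N)|_{q_i}\ge 0$ for every $i$. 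Expanding, $|A(N)|_{q_i}=\sum_{j=1}^{r}\big|\prod_{1\neq d\mid n_j+1}\Phi_d(p_j)\big|_{q_i}-\sum_{j=1}^{r}a_{j,i}$, so for each fixed $i$ the task is to show that $\sum_{j}a_{j,i}\le\sum_{j}\big|\prod_{1\neq d\mid n_j+1}\Phi_d(p_j)\big|_{q_i}$ is equivalent to conditions a) and b).

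First I would compute, for fixed $i$ and $j$, the $q_i$-adic valuation of $\prod_{1\neq d\mid n_j+1}\Phi_d(p_j)$. Writing $e=\alpha_{i,j}=\textrm{ord}_{q_i}(p_j)$, Theorem 1 shows that a divisor $d\neq 1$ of $n_j+1$ can satisfy $q_i\mid\Phi_d(p_j)$ only when $d=e\,q_i^{t}$: part ii) forces $\textrm{ord}_{q_i}(p_j)=d$ whenever $q_i$ is not the largest prime factor of $d$, and part i) governs the case where it is. In particular the whole product contributes nothing at $q_i$ unless $e\mid n_j+1$, i.e.\ unless $j\in J(i)$. This yields the necessity of a): every $q_i$ divides some $n_l+1$, so $\sum_{j}a_{j,i}\ge 1$, and if $J(i)=\emptyset$ the numerator valuation vanishes and $|A(N)|_{q_i}<0$. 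For $j\in J(i)$ the admissible divisors are $d=e\,q_i^{t}$ with $0\le t\le a_{j,i}$; by Theorem 1 i) each term with $t\ge 1$ contributes exactly one factor of $q_i$, for a subtotal of $a_{j,i}$, while the term $t=0$ (present only when $e\neq 1$) contributes $|p_j^{e}-1|_{q_i}=|\Phi_e(p_j)|_{q_i}$.

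Next I would split $J(i)$ according to membership in $E(i)$. For $j\in J(i)\setminus E(i)$ we have $a_{j,i}=0$, so the entire numerator valuation equals $\big|\prod_{1\neq d\mid n_j+1}\Phi_d(p_j)\big|_{q_i}$, which is exactly the expression appearing in the second sum of b). For $j\in J(i)\cap E(i)$ the contribution recorded in b) is $a_{j,i}$, and after substituting the valuations into $|A(N)|_{q_i}\ge 0$ the common block $\sum_{j\in J(i)\cap E(i)}a_{j,i}$ cancels against part of $\sum_{j}a_{j,i}$, leaving the inequality b). The hard part is the bookkeeping of the $t=0$ term $\Phi_{\alpha_{i,j}}(p_j)$: its valuation $|p_j^{\alpha_{i,j}}-1|_{q_i}$ is \emph{not} controlled by the ``exactly one factor'' clause of Theorem 1 i) (which applies only when $q_i$ is the largest prime factor of the index), so one must argue separately --- here through Proposition 2 and the relation $\textrm{ord}_{q_i}(p_j)\mid \gcd(q_i-1,n_j+1)$ --- that these terms are assigned to the correct side of the inequality and do not disturb the equivalence. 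Once that reconciliation is settled, assembling the per-$j$ valuations over all $j$ and imposing $|A(N)|_{q_i}\ge 0$ for every $i$ gives the stated characterization.
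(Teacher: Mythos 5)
Your setup is sound and, up to the final paragraph, actually more careful than the paper's own argument: the reduction to one $q_i$-adic inequality per $i$, the observation that a divisor $d\neq 1$ of $n_j+1$ can satisfy $q_i\mid\Phi_d(p_j)$ only for $d=\alpha_{i,j}q_i^{t}$, the count of exactly one factor of $q_i$ for each $t\geq 1$, and the identification of the extra $t=0$ contribution $\left|\Phi_{\alpha_{i,j}}(p_j)\right|_{q_i}$ are all correct. The gap is precisely the step you defer: the ``reconciliation'' of the $t=0$ term for $j\in J(i)\cap E(i)$ with $\alpha_{i,j}\neq 1$ cannot be carried out, because in that case the true valuation of $\prod_{1\neq d\mid n_j+1}\Phi_d(p_j)$ at $q_i$ is $a_{j,i}+\left|\Phi_{\alpha_{i,j}}(p_j)\right|_{q_i}$, strictly larger than the $a_{j,i}$ that condition b) credits to such $j$. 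Your computation therefore proves that a) and b) are \emph{sufficient}, but as a \emph{necessary} condition it only yields the weaker inequality in which $a_{j,i}$ is replaced by the full valuation over $J(i)\cap E(i)$; these do not coincide. The paper's proof glosses over the same point by asserting that for $j\in J(i)\cap E(i)$ the exact power of $q_i$ dividing $\prod_{1\neq d\mid n_j+1}\Phi_d(p_j)$ is $q_i^{a_{j,i}}$, which is false whenever $\alpha_{i,j}\neq 1$, since $\alpha_{i,j}$ is then itself a divisor $\neq 1$ of $n_j+1$ with $q_i\mid\Phi_{\alpha_{i,j}}(p_j)$.

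To see that this is not a repairable bookkeeping issue, take $N=13^{13}\cdot 3^{6}$, so $\tau(N)=14\cdot 7$ and $\sigma(N)=\frac{13^{14}-1}{12}\cdot\frac{3^{7}-1}{2}$. Here $\frac{3^7-1}{2}=1093$ is odd and prime to $7$, while $\left|\frac{13^{14}-1}{12}\right|_2=1$ and $\left|\frac{13^{14}-1}{12}\right|_7=2$ (the two factors of $7$ coming from $\Phi_2(13)=14$ and $\Phi_{14}(13)$), so $A(N)\in\mathbb{Z}$. Yet at $q=7$ one has $\textrm{ord}_7(13)=2\mid 14$ and $\textrm{ord}_7(3)=6\nmid 7$, so $J\cap E=\{1\}$ and $J\setminus E=\emptyset$, and condition b) reads $2\leq 1$. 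The deficit created by $3^{6}$ is absorbed by the factor of $7$ in $\Phi_{2}(13)$, which b) does not count. So the step you left open is exactly where the stated equivalence fails, and no completion of the argument can establish the ``only if'' direction of b) as written.
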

\begin{proof}
First of all observe that
$$A(N)=\frac{\displaystyle{\prod_{k=1}^r\left(\prod_{1\neq d|n_k+1}\Phi_d(p_k)\right)}}{\displaystyle{\prod_{k=1}^{s}\left(q_k^{\sum_{j=1}^{r}a_{j,k}}\right)}}.$$
Now, assume that $A(N)\in\mathbb{Z}$ and fix $q_i$ for some $1\leq i\leq s$. It follows that $q_i$ divides $\Phi_d(p_j)$ with $d|n_j+1$ for some $1\leq j\leq r$ and three cases arise:
\begin{itemize}
\item[i)] $q_i\not| d$. In this cases Theorem 1 ii) applies to obtain that $\alpha_{i,j}=d$ divides $n_j+1$.
\item[ii)] $q_i|d$ and it is the largest prime factor of $d$. If $d=q_i^{\epsilon}d'$ Theorem 1 i) implies that $\alpha_{i,j}=d'$ divides $n_j+1$.
\item[iii)] $q_i|d$ and $d$ contains a prime factor $q_k$ larger or equal that $q_i$. Theorem 1 i) implies that $q_k|d|q_i-1$ which is a contradiction.
\end{itemize}
We have thus seen that $\alpha_{i,j}$ divides $n_j+1$ for some $j$; i.e., that $J(i)\neq\emptyset$ and a) is proved.

If $j\not\in J(i)$, then $\alpha_{i,j}$ does not divide $n_j+1$ and Theorem 1 implies that $q_i$ cannot divide $\Phi_d(p_j)$ for any divisor $d$ of $n_j+1$.
Now, if $j\in J(i)\cap E(i)$, Theorem 1 i) implies that $q_i^{a_{j,i}}$ is the largest power of $q_i$ dividing $\prod_{1\neq d|n_j+1}\Phi_d(p_j)$.
Finally, if $j\in J(i)\setminus E(i)$ it follows that $q_i$ divides $\Phi_{\alpha_{i,j}}(p_j)$. This proves b).

The converse also follows from Theorem 1 and we give no further details.
\end{proof}

If, in the previous result we assume $n_1+1,\dots,n_r+1$ to be distinct primes, we obtain the following proposition. Although it is a consequence of Theorem 3, we will give a self-contained proof. 

\begin{prop}
Let $p_1,\dots,p_r$ be distinct primes and let $q_1<\cdots< q_r$ also be primes. Put $n=p_1^{q_1-1}\cdots p_r^{q_r-1}$. Then $A(n)\in\mathbb{Z}$ if and only if for every $i\in\{1,\dots,r\}$ either $q_i|p_i-1$ or there exists $j<i$ such that $\textrm{ord}_{q_i}(p_j)=q_j$ (hence $q_j|q_i-1$).
\end{prop}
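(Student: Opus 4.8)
The plan is to exploit the multiplicativity of $A(n)=\sigma(n)/\tau(n)$ together with the fact that each exponent $q_i-1$ is one less than a prime, which collapses the cyclotomic product for each prime power to a single factor. First I would record that, since $\sigma$ and $\tau$ are multiplicative, $A$ is multiplicative; and since $\tau(p_i^{q_i-1})=q_i$ while $\sigma(p_i^{q_i-1})=\frac{p_i^{q_i}-1}{p_i-1}=\Phi_{q_i}(p_i)$ (the only divisor $d\neq 1$ of the prime $q_i$ being $q_i$ itself), we obtain
$$A(n)=\prod_{i=1}^r A(p_i^{q_i-1})=\frac{\prod_{i=1}^r\Phi_{q_i}(p_i)}{\prod_{i=1}^r q_i}.$$
Because $q_1<\cdots<q_r$ are distinct primes, the denominator $\prod_{i=1}^r q_i$ is square-free, so $A(n)\in\mathbb{Z}$ if and only if each $q_i$ divides the numerator $\prod_{k=1}^r\Phi_{q_k}(p_k)$; equivalently, if and only if for every $i$ there is some index $k$ with $q_i\mid\Phi_{q_k}(p_k)$. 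This reduces everything to a local divisibility question for one prime $q_i$ at a time.

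Next, for a fixed $i$, I would determine exactly which factors $\Phi_{q_k}(p_k)$ can be divisible by $q_i$ by applying Theorem 1 with $a=p_k$ and $n=q_k$. Since $q_k$ is prime, it is its own largest prime factor, so Theorem 1 splits into two clean cases. If $k=i$, then $q_i$ is the largest prime factor of $q_k=q_i$, and part i) gives $q_i\mid\Phi_{q_i}(p_i)$ if and only if $\textrm{ord}_{q_i}(p_i)=1$, i.e. $q_i\mid p_i-1$. If $k\neq i$, then $q_i$ is a prime different from the largest prime factor $q_k$ of $q_k$, so part ii) gives $q_i\mid\Phi_{q_k}(p_k)$ if and only if $\textrm{ord}_{q_i}(p_k)=q_k$.

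I would then observe the ordering constraint that forces the statement into its asymmetric shape: whenever $\textrm{ord}_{q_i}(p_k)=q_k$ we have $q_k\mid\textrm{ord}_{q_i}(p_k)\mid q_i-1$, so $q_k<q_i$ and hence $k<i$. In particular no factor with $k>i$ can ever be divisible by $q_i$, and the only surviving off-diagonal contributions come from indices $j<i$ with $\textrm{ord}_{q_i}(p_j)=q_j$. Collecting the two cases, $q_i$ divides the numerator if and only if $q_i\mid p_i-1$ or there exists $j<i$ with $\textrm{ord}_{q_i}(p_j)=q_j$; ranging over all $i$ then yields the claim, and the parenthetical remark $q_j\mid q_i-1$ is exactly the order-divisibility used above.

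The step I expect to require the most care is the case analysis via Theorem 1: one must check that its hypotheses are invoked correctly, in particular that for $k\neq i$ the prime $q_i$ genuinely falls under the ``another prime'' clause of part ii) rather than the largest-prime clause of part i), and that the divisibility $\textrm{ord}_{q_i}(p_k)\mid q_i-1$ is deployed both to exclude $k>i$ and to recover $q_j\mid q_i-1$. The multiplicativity reduction and the square-free-denominator observation are routine once stated.
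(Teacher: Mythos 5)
Your proposal is correct and follows essentially the same route as the paper: reduce to $A(n)=\prod_i\Phi_{q_i}(p_i)/\prod_i q_i$, test divisibility by each $q_i$ separately, and split into the diagonal case ($q_i\mid\Phi_{q_i}(p_i)$ iff $q_i\mid p_i-1$, via Theorem 1 i)/Proposition 1) and the off-diagonal case (Theorem 1 ii) forces $\textrm{ord}_{q_i}(p_j)=q_j$, hence $q_j\mid q_i-1$ and $j<i$). The only cosmetic difference is that you derive the displayed formula from multiplicativity of $A$ rather than stating it directly.
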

\begin{proof}
Observe that $A(n)=\displaystyle{\frac{\Phi_{q_1}(p_1)\cdots\Phi_{q_r}(p_r)}{q_1\cdots q_r}}$. Thus $A(n)\in\mathbb{Z}$ if and only if $q_i$ divides $\displaystyle{\prod_{j=1}^r\Phi_{q_j}(p_j)}$ for every $1\leq i\leq r$. Now, fix $i$ and assume that $q_i$ divides $\Phi_{q_j}(p_j)$ for some $1\leq j\leq r$. Then, two cases arise:
\begin{itemize}
\item[i)] $i=j$. Due to Proposition 1, this happens if and only if $q_i$ divides $p_i-1$.
\item[ii)] $i\neq j$. Theorem 1 ii) implies that $\textrm{ord}_{q_i}(p_j)=q_j$ and $q_i\equiv 1$ (mod $q_j$) (and consequently $j<i$).
\end{itemize}
The converse is obvious since $\textrm{ord}_{q_i}(p_j)=q_j$ clearly implies that $q_i$ divides $\Phi_{q_j}(p_j)$ and the proof is complete.
\end{proof}

We will close the paper with a necessary condition for an integer to be arithmetic. It is a consequence of Theorem 3, so we will keep using the same notation.

\begin{cor}
Let $N=p_1^{n_1}\cdots p_r^{n_r}$ with $p_1,\dots,p_r$ being distinct primes and $n_1,\dots, n_r$ being any integers. Let us denote by $\mathcal{Q}$ the set of primes appearing in the factorizations of $n_1+1,\dots, n_r+1$ and put $q_1=\min\mathcal{Q}$. Assume that $q|n_k+1$ for a unique $k$. In this situation if $n$ is arithmetic, then $q$ divides $p_k^{n_k+1}-1$. 
\end{cor}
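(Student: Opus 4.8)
The plan is to obtain the statement as a direct specialization of Theorem 3 to the smallest prime $q_1=\min\mathcal{Q}$; throughout I write $q=q_1$ and take the index $i=1$. Since $A$ is multiplicative, the formula for $A(N)$ established in the proof of Theorem 3 shows that the exponent of $q_1$ in the denominator equals $\sum_{j=1}^{r}a_{j,1}$. The hypothesis that $q_1$ divides $n_k+1$ for a unique $k$ says exactly that $E(1)=\{k\}$, so $a_{j,1}=0$ for $j\neq k$ and this exponent is just $a_{k,1}\geq 1$. Condition b) of Theorem 3 at $i=1$ therefore reads
$$a_{k,1}\leq\sum_{j\in J(1)\cap\{k\}}a_{j,1}+\sum_{j\in J(1)\setminus\{k\}}\left|\prod_{1\neq d|n_j+1}\Phi_d(p_j)\right|_{q_1}.$$

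The heart of the argument is to show that the second sum vanishes, and this is where the minimality of $q_1$ is used. For any $j\in J(1)$ one has $\textrm{ord}_{q_1}(p_j)\mid n_j+1$; but $\textrm{ord}_{q_1}(p_j)$ divides $q_1-1$, so all its prime factors are smaller than $q_1$, whereas every prime factor of $n_j+1$ belongs to $\mathcal{Q}$ and is hence $\geq q_1$. These two constraints force $\textrm{ord}_{q_1}(p_j)=1$. Now if $j\in J(1)\setminus\{k\}$ then $q_1\nmid n_j+1$, so $q_1$ divides no index $d\geq 2$ of $n_j+1$, and by Theorem 1 ii) the relation $q_1\mid\Phi_d(p_j)$ would force $\textrm{ord}_{q_1}(p_j)=d\geq 2$, contradicting $\textrm{ord}_{q_1}(p_j)=1$. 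Hence every term of the second sum is $0$ and the inequality collapses to $a_{k,1}\leq\sum_{j\in J(1)\cap\{k\}}a_{j,1}$.

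Since $a_{k,1}\geq 1$, this can only hold if $k\in J(1)$, i.e.\ if $\textrm{ord}_{q_1}(p_k)\mid n_k+1$, which is precisely the desired conclusion $q_1\mid p_k^{\,n_k+1}-1$. I expect the only delicate point to be the vanishing of the second sum: besides combining the incompatible size restrictions on the prime factors of $\textrm{ord}_{q_1}(p_j)$ and of $n_j+1$, one should dispose of the degenerate case $q_1=p_j$ (where the order is undefined) by observing that $\Phi_d(p_j)\equiv\Phi_d(0)=1\pmod{q_1}$ for every $d\geq 2$, so that the corresponding term is again $0$. With that settled, the conclusion drops out immediately.
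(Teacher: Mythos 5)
Your proof is correct and follows essentially the same route as the paper's: both specialize Theorem 3 at $i=1$, observe that $E(1)=\{k\}$, and use the minimality of $q_1$ to force $\mathrm{ord}_{q_1}(p_j)=1$ for every $j\in J(1)$, so that no index other than $k$ can contribute a factor of $q_1$ to the numerator. The only (minor) difference is that you enter through condition b) and compute $q_1$-valuations directly, whereas the paper argues by contradiction from condition a); your explicit appeal to Theorem 1 ii) to annihilate the terms with $j\in J(1)\setminus E(1)$ actually spells out the step the paper compresses into ``this clearly implies that $q_1$ divides $n_h+1$''.
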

\begin{proof}
With the notation of Theorem 3, we have that $E(1)=\{k\}$; i.e., $a_{i,1}=0$ for every $i\neq k$. Thus $J(1)\cap E(1)=\{k\}$ if $\alpha_{1,k}$ divides $n_k+1$ and empty otherwise. 

Assume that $q_1$ does not divide $p_k^{n_k+1}-1$. This means that $\alpha_{1,k}\not\in J(1)$ so, since $J(1)\neq\emptyset$ there must exist $h\neq k\in J(1)$. Consequently $\alpha_{1,h}$ divides $n_h+1$ but, since $\gcd(q_1-1,n_h+1)=1$ (recall that $q_1=\min\mathcal{Q}$) it follows that $p_h\equiv 1$ (mod $q_1$). This clearly implies that $q_1$ divides $n_h+1$; i.e., that $h\in E(1)=\{k\}$. A contradiction.
\end{proof}

\begin{rem}
Observe that we can always apply the previous corollary if $\gcd(n_i+1,n_j+1)=1$ for all $i,j$, but if $q_1=2$ it is only useful when $p_k=2$.
\end{rem}

\begin{exa}
Let $n=3^{34}5^87^{24}$. In this case $\min\mathcal{P}=3$ and it only divides $n_2+1=9$ and we can apply the previous proposition. Since 3 does not divide $5^9-1$ we conclude that $n$ is not arithmetic.
\end{exa}

\bibliography{./refsumpot}
\bibliographystyle{plain}

\end{document}